\author{David Monniaux\thanks{CNRS / VERIMAG} \and
Pierre Corbineau\thanks{Universit\'e Joseph Fourier / VERIMAG}}
\title{On the Generation of Positivstellensatz Witnesses in Degenerate Cases\thanks{This work was partially supported by ANR project ``ASOPT''.}}
\newcommand{\bbQ}{\mathbb{Q}}
\newcommand{\bbR}{\mathbb{R}}
\newcommand{\bbZ}{\mathbb{Z}}
\newcommand{\bbC}{\mathbb{C}}
\newcommand{\sdpcone}{\mathcal{K}}
\newcommand{\transpose}[1]{#1^T}
\newcommand{\float}[1]{\tilde{#1}}
\newcommand{\ve}[1]{\mathbf{#1}}
\newcommand{\vect}{\textrm{Vect~}}
\newcommand{\tgerman}[1]{\emph{#1}}
\newcommand{\soft}[1]{#1}
\newtheorem{theorem}{Theorem}
\newtheorem{lemma}{Lemma}
\begin{document}
\maketitle

\begin{abstract}
One can reduce the problem of proving that a polynomial is nonnegative, or more generally of proving that a system of polynomial inequalities has no solutions, to finding polynomials that are sums of squares of polynomials and satisfy some linear equality (\tgerman{Positivstellensatz}). This produces a \emph{witness} for the desired property, from which it is reasonably easy to obtain a formal proof of the property suitable for a proof assistant such as Coq.

The problem of finding a witness reduces to a feasibility problem in semidefinite programming, for which there exist numerical solvers. Unfortunately, this problem is in general not strictly feasible, meaning the solution can be a convex set with empty interior, in which case the numerical optimization method fails. Previously published methods thus assumed strict feasibility; we propose a workaround for this difficulty.

We implemented our method and illustrate its use with examples, including extractions of proofs to Coq.
\end{abstract}



\section{Introduction}
Consider the following problem: given a conjunction of polynomial equalities, and (wide and strict) polynomial inequalities, with integer or rational coefficients, decide whether this conjunction is satisfiable over $\bbR$; that is, whether one can assign real values to the variables so that the conjunction holds. A particular case is showing that a given polynomial is nonnegative.
 
The decision problem for real polynomial inequalities can be reduced to \emph{quantifier elimination}: given a formula $F$, whose atomic formulas are polynomial (in)equalities, containing quantifiers, provide another, equivalent, formula $F'$, whose atomic formulas are still polynomial (in)equalities, containing no quantifier. An algorithm for quantifier elimination over the theory of \emph{real closed fields} (roughly speaking, $(\bbR,0,1,+,\times,\leq)$ was first proposed by Tarski \cite{Tarski51,Seidenberg54}, but this algorithm had non-elementary complexity and thus was impractical. Later, the \emph{cylindrical algebraic decomposition} (CAD) algorithm was proposed \cite{Collins75}, with a doubly exponential complexity, but despite improvements \cite{Collins98} CAD is still slow in practice and there are few implementations available.

Quantifier elimination is not the only decision method.
Basu et al. \cite[Theorem~3]{BasuPollackRoy96} proposed a satisfiability testing algorithm with complexity $s^{k+1} d^{O(k)}$, where $s$ is the number of distinct polynomials appearing in the formula, $d$ is their maximal degree, and $k$ is the number of variables. We know of no implementation of that algorithm.
Tiwari \cite{Tiwari05:CSL} proposed an algorithm based on rewriting systems that is supposed to answer in reasonable time when a conjunction of polynomial inequalities has no solution.

Many of the algebraic algorithms are complex, which leads to complex implementations. This poses a methodology problem: can one trust their results? The use of computer programs for proving lemmas used in mathematical theorems was criticized in the case of Thomas Hales' proof of the Kepler conjecture. Similarly, the use of complex decision procedures (as in the proof assistant PVS%
\footnote{\url{http://pvs.csl.sri.com/}}) or program analyzers (as, for instance, Astr\'ee%
\footnote{\url{http://www.astree.ens.fr/}}) in order to prove the correctness of critical computer programs is criticized on grounds that these verification systems could themselves contain bugs.

One could formally prove correct the implementation of the decision procedure using a proof assistant such as Coq \cite{Coq_8.1,Mahboubi07:_implem_cad_coq}; but this is likely to be long and difficult. An alternative is to arrange for the procedure to provide a \emph{witness} of its result. The answer of the procedure is correct if the witness is correct, and correctness of the witness can be checked by a much simpler procedure, which can be proved correct much more easily.

Unsatisfiability witnesses for systems of complex equalities or \emph{linear} rational inequalities are already used within DPLL($T$) satisfiability modulo theory decision procedures~\cite[ch.~11]{Kroening_Strichman_08}\cite{SRI-CSL-06-01}. It is therefore tempting to seek unsatisfiability witnesses for systems of polynomial inequalities.

In recent years, it was suggested \cite{Parrilo_PhD} to use numerical \emph{semidefinite programming} to look for proof witnesses whose existence is guaranteed by a \tgerman{Positivstellensatz}~\cite{Krivine_64_MR0175937,Stengle73,Schweighofer2004529}. The original problem of proving that a system of polynomial inequalities has no solution is reduced to: given polynomials $P_i$ and $R$, derived from those in the original inequalities, find polynomials $Q_i$ that are sums of squares such that $\sum_i P_i Q_i = R$. Assuming some bounds on the degrees of $Q_i$, this problem is in turn reduced to a \emph{semidefinite programming} pure feasibility problem~\cite{cvxopt,Semidefinite_Programming_96}, a form of convex optimization. The polynomials $Q_i$ then form a witness, from which a machine-checkable formal proof, suitable for tools such as Coq~\cite{Coq_8.1} or Isabelle \cite{harrison-sos}, may be constructed.

Unfortunately, this method suffers from a caveat: it applies only under a \emph{strict feasibility} condition \cite{PeyrlParrilo08}: a certain convex geometrical object should not be degenerate, that is, it should have nonempty interior. Unfortunately it is very easy to obtain problems where this condition is not true. Equivalently, the method of rationalization of certificates \cite{Kaltofen_et_al_ISSAC08} has a limiting requirement that the rationalized moment matrix remains positive semidefinite.

In this article, we explain how to work around the degeneracy problem: we propose a method to look for rational solutions to a general SDP feasibility problem. We have implemented our method and applied it to some examples from the literature on positive polynomials, and to examples that previously published techniques failed to process.

\section{Witnesses}
For many interesting theories, it is trivial to check that a given valuation of the variables satisfies a quantifier-free formula. A satisfiability decision procedure will in this case tend to seek a \emph{satisfiability witness} and provide it to the user when giving a positive answer.

In contrast, if the answer is that the problem is not satisfiable, the user has to trust the output of the satisfiability testing algorithm, the informal meaning of which is ``I looked carefully everywhere and did not find a solution.'' In some cases, it is possible to provide \emph{unsatisfiability witnesses}: solutions to some form of dual or auxiliary problem that show that the original problem had no solution.

\subsection{Nonnegativity Witnesses}
\label{part:nonnegativity}
To prove that a polynomial $P$ is nonnegative, one simple method is to express it as a sum of squares of polynomials. One good point is that the degree of the polynomials involved in this sum of squares can be bounded, and even that the choice of possible monomials is constrained by the Newton polytope of $P$, as seen in \S\ref{part:solve_sos}.

Yet, there exist nonnegative polynomials that cannot be expressed as sums of squares, for instance this example due to Motzkin~\cite{Reznick96_Hil17}:
\begin{equation}\label{eqn:motzkin}
M = x_1^6 + x_2^4 x_3^2 + x_2^2 x_3^4 - 3 x_1^2 x_2^2 x_3^2
\end{equation}

However, Artin's answer to Hilbert's seventeenth problem is that any nonnegative polynomial can be expressed as a sum of squares of rational functions.%
\footnote{There exists a theoretical exact algorithm for computing such a decomposition for homogeneous polynomials of at most 3 variables~\cite{DeKlerk_Pasechnik_2004}; we know of no implementation of it and no result about its practical usability.}

It follows that such a polynomial can always be expressed as the quotient $Q_2/Q_1$ of two sums of squares of polynomials, which forms the nonnegativity witness, and can be obtained by solving $P.Q_1 - Q_2 = 0$ for $Q_1 \neq 0$ (this result is also a corollary of Th.~\ref{th:real-nullstellensatz}).

\subsection{Unsatisfiability Witnesses for Polynomial Inequalities}
\label{part:positivstellensatz}
For the sake of simplicity, we shall restrict ourselves to wide inequalities (the extension to mixed wide/strict inequalities is possible).
Let us first remark that the problem of testing whether a set of wide inequalities with coefficients in a subfield $K$ of the real numbers is satisfiable over the real numbers is equivalent to the problem of testing whether a set of \emph{equalities} with coefficients $K$ is satisfiable over the real numbers: for each inequality $P(x_1,\dots,x_m) \geq 0$, replace it by $P(x_1,\dots,x_m) - \mu^2 =0$, where $\mu$ is a new variable.
Strict inequalities can also be simulated as follows: $P_i(x_1,\dots,x_m) \neq 0$ is replaced by $P_i(x_1,\dots,x_m).\mu = 1$ where $\mu$ is a new variable.
One therefore does not gain theoretical simplicity by restricting oneself to equalities.

Stengle \cite{Stengle73} proved two theorems regarding the solution sets of systems of polynomial equalities and inequalities over the reals (or, more generally, over real closed fields): a \tgerman{Nullstellensatz} and a \tgerman{Positivstellensatz}; a similar result was proved by Krivine~\cite{Krivine_64_MR0175937}. Without going into overly complex notations, let us state consequences of these theorems.

Let $K$ be an ordered field (such as~$\bbQ$) and $K'$ be a real closed field containing $K$ (such as the real field~$\bbR$), and let $\mathbf{X}$ be a list of variables $X_1,\dots,X_n$.
$A^{*2}$ denotes the squares of elements of $A$. The \emph{multiplicative monoid} generated by $A$ is the set of products of zero of more elements from $A$. The \emph{ideal} generated by $A$ is the set of sums of products of the form $PQ$ where $Q \in K[\mathbf{X}]$ and $P \in A$. The \emph{positive cone} generated by $A$ is the set of sums of products of the form $p.P.Q^2$ where $p \in K$, $p > 0$, $P$ is in the multiplicative monoid generated by $A$, and $Q \in K[\mathbf{X}]$. Remark that we can restrict $P$ to be in the set of products of elements of $A$ where no element is taken twice, with no loss of generality.

The result \cite{Lombardi_LNM92,Lombardi_zeros_effectifs,Coste_Lombardi_Roy_2001} of interest to us is:
\begin{theorem}\label{th:real-nullstellensatz}
Let $F_>$, $F_{\geq}$, $F_=$, $F_{\neq}$ be sets of polynomials in $K[\mathbf{X}]$, to which we impose respective sign conditions $> 0$, $\geq 0$, $= 0$, $\neq 0$. The resulting system is unsatisfiable over ${K'}^n$ if and only if there exist an equality in $K[\mathbf{X}]$ of the type $S+P+Z=0$, with $S$ in the multiplicative monoid generated by $F_> \cup F_{\neq}^{*2}$ , $P$ belongs to the positive cone generated by $F_{\geq} \cup F_>$, and $Z$ belongs to the ideal generated by $F_=$.
\end{theorem}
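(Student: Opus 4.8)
The plan is to prove the two directions separately: the ``if'' (soundness) direction by a direct evaluation, and the ``only if'' (completeness) direction by the real-algebraic machinery underlying the Positivstellensatz, namely extension of cones to orderings plus the Artin--Lang homomorphism theorem and the Tarski transfer principle.

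First I would dispatch soundness. Suppose, for contradiction, that some $\mathbf{x} \in {K'}^n$ satisfied all the prescribed sign conditions while an identity $S+P+Z=0$ of the stated form held. Every factor occurring in $S$ is either a polynomial of $F_>$, which evaluates to a strictly positive number, or the square of a polynomial of $F_\neq$, which evaluates to a strictly positive number; hence $S(\mathbf{x}) > 0$ (the empty product being $1$). Each generator $p.P.Q^2$ of the positive cone has $p>0$, $P(\mathbf{x}) \geq 0$ and $Q(\mathbf{x})^2 \geq 0$, so $P(\mathbf{x}) \geq 0$; and $Z(\mathbf{x}) = 0$ since $Z$ lies in the ideal generated by $F_=$. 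Then $S(\mathbf{x}) + P(\mathbf{x}) + Z(\mathbf{x}) > 0$, contradicting the identity. So the existence of such an identity precludes solutions.

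For the converse I would argue by contraposition and first reduce to a standard form. Replace each disequality $P \neq 0$ with $P \in F_\neq$ by the strict inequality $P^2 > 0$; this changes neither the solution set nor the admissible shape of witnesses (the squares $F_\neq^{*2}$ are exactly what the statement allows among the strict-positivity generators of $S$). Then pass to the ring $A = K[\mathbf{X}]/I$, where $I$ is the ideal generated by $F_=$, so the equalities hold identically; let $T$ be the image in $A$ of the positive cone generated by $F_\geq \cup F_>$, and $M$ the image of the multiplicative monoid generated by the strict-positivity polynomials. A short computation shows that the \emph{non}-existence of an identity $S+P+Z=0$ of the prescribed form is equivalent to: $-s \notin T$ for every $s \in M$. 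Assuming this, I would construct a point satisfying all constraints: inverting $M$ and invoking Zorn's lemma, one extends $T$ to a cone maximal among those not containing $-1$, which is a \emph{prime cone} (an ordering) $\alpha$ of the localized ring, with support a prime ideal $\mathfrak{p}$ and with every element of $M$ a positive unit. Then $A/\mathfrak{p}$ is an ordered integral domain; letting $R$ be the real closure of its fraction field for the induced order, the composite $K[\mathbf{X}] \to A \to A/\mathfrak{p} \hookrightarrow R$ sends $X_1,\dots,X_n$ to a point $\xi \in R^n$ at which every polynomial of $F_>$ is positive, every polynomial of $F_\geq$ is nonnegative, every polynomial of $F_=$ vanishes, and every polynomial of $F_\neq$ is nonzero. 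This last step is the Artin--Lang homomorphism theorem \cite{Coste_Lombardi_Roy_2001}.

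Finally I would transfer the solution from $R$ to $K'$. The existence of $\xi$ asserts that a first-order formula $\exists \mathbf{x}\,\varphi(\mathbf{x})$, whose parameters are the coefficients (lying in $K$) of the polynomials at hand, is true in the real closed field $R \supseteq K$; by completeness of the theory of real closed fields \cite{Tarski51}, equivalently by quantifier elimination, it is then true in every real closed field containing $K$, in particular in $K'$, contradicting unsatisfiability over ${K'}^n$. The main obstacle is clearly the third paragraph: accommodating strict inequalities, wide inequalities, equalities and disequalities simultaneously in a single cone/monoid pair, and carrying out the Zorn extension to a prime cone together with the Artin--Lang theorem --- this is where all the real algebraic geometry is concentrated. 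Soundness and the Tarski transfer are routine by comparison.
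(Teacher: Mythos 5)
The paper does not prove this theorem: it states it as a known result from the real algebraic geometry literature (Krivine, Stengle, and in the effective formulations Lombardi and Coste--Lombardi--Roy), so there is no in-paper proof against which to compare your argument. Your sketch is a correct high-level outline of the classical proof. The soundness direction by evaluation at a hypothetical solution is routine and complete as you wrote it. The completeness direction --- rewrite each $P\neq 0$ as $P^2>0$, quotient by the ideal generated by $F_=$, observe that absence of a witness is equivalent to $-s\notin T$ for all $s$ in the monoid $M$, localize at $M$, extend $T$ by Zorn's lemma to a maximal proper cone (an ordering with prime support $\mathfrak{p}$), realize a point over a real closed field via Artin--Lang, and carry it to $K'$ by completeness of the theory of real closed fields --- is exactly the route taken in the cited references. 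One point worth making explicit to justify your claim that the reduction from $F_\neq$ to $F_\neq^{*2}$ is ``shape-preserving'': any positive cone in the paper's sense already contains all squares (take the empty monoid product, $p=1$, and arbitrary $Q$), so the cone generated by $F_\geq\cup F_>\cup F_\neq^{*2}$ coincides with the one generated by $F_\geq\cup F_>$; hence the admissible witness set really is unchanged, as you assert. The remaining nontrivial facts you invoke --- that a maximal cone not containing $-1$ is a prime cone with prime support, and the Artin--Lang homomorphism theorem --- are standard lemmas, and citing them at this level of detail is reasonable for a proof sketch.
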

$(S,P,Z)$ then constitute a \emph{witness} of the unsatisfiability of the system.
\footnote{Another result, due to Schm\"udgen\cite{Schweighofer2004529}, gives simpler witnesses for $P_1 \geq 0 \land \dots \land P_n \geq 0 \Rightarrow C$ in the case where $P_1 \geq 0 \land \dots \land P_n \geq 0$ defines a compact set.}

For a simple example, consider the following system, which obviously has no solution:
\begin{equation}\label{eqn:very_simple_example}
\left\{
\begin{array}{l}
-2 + y^2 \geq 0\\
1 - y^4 \geq 0
\end{array}\right.\end{equation}
A \tgerman{Positivstellensatz} witness is $y^2(-2+y^2)+1(1-y^4)+2y^2+1=0$. Another is $\left(\frac{2}{3}+\frac{y^2}{3}\right)(-2+y^2)+\frac{1}{3}(1-y^4)+1=0$.

Consider the conjunction $C$: $P_1 \geq 0 \wedge \dots \wedge P_n \geq 0$ where $P_i \in \bbQ[X_1, \dots, X_m]$. Consider the set $\Pi$ of products of the form $\prod_i P_i^{w_i}$ for $\ve{w} \in \{0,1\}^n$ --- that is, the set of all products of the $P_i$ where each $P_i$ appears at most once. Obviously, if one can exhibit nonnegative functions $Q_j$ such that $\sum_{T_j \in \Pi} Q_j T_j + 1 =0$, then $C$ does not have solutions.
Theorem~\ref{th:real-nullstellensatz} guarantees that if $C$ has no solutions, then such functions $Q_j$ exist as sum of squares of polynomials (we simply apply the theorem with $F_>=F_{\neq}=\emptyset$ and thus $S=\{1\}$). We have again reduced our problem to the following problem: given polynomials $T_j$ and $R$, find sums-of-squares polynomials $Q_j$ such that $\sum_j Q_j T_j = R$. Because of the high cost of enumerating all products of the form $\prod_i P_i^{w_i}$, we have first looked for witnesses of the form $\sum_{T_j \in S} Q_j P_j + 1 =0$.

\section{Solving the Sums-of-Squares Problem}
\label{part:solve_sos}
In \S\ref{part:nonnegativity} and \S\ref{part:positivstellensatz}, we have reduced our problems to: given polynomials $(P_j)_{1 \leq j \leq n}$ and $R$ in $\bbQ[X_1, \dots, X_m]$, find polynomials that are sums of squares $Q_j$ such that
\begin{equation}
\label{eqn:sos-equation}
\sum_j P_j Q_j = R
\end{equation}
We wish to output the $Q_j$ as $Q_j = \sum_{i=1}^{n_j} \alpha_{ji} L_{ji}^2$ where $\alpha_{ji} \in \bbQ^+$ and $L_{ji}$ are polynomials over~$\bbQ$.
We now show how to solve this equation.

\subsection{Reduction to Semidefinite Programming}
\label{part:reduction}

\begin{lemma}\label{lem:sos_is_sym_matrix}
Let $P \in K[X, Y, \dots]$ be a sum of squares of polynomials $\sum_i P_i^2$. Let $M = \{m_1, \dots, m_{|M|}\}$ be a set such that each $P_i$ can be written as a linear combination of elements of~$M$ ($M$ can be for instance the set of monomials in the $P_i$). Then there exists a $|M| \times |M|$  symmetric positive semidefinite matrix $Q$ with coefficients in $K$ such that $P(X, Y, \dots) = [m_1, \dots, m_{|M|}] Q [m_1, \dots, m_{|M|}]^T$, noting $v^T$ the transpose of~$v$.
\end{lemma}

Assume that we know the $M_j$, but we do not know the matrices $\hat{Q}_j$. The equality $\sum_j P_j (M_j \hat{Q}_j(M_j)^T) = R$ directly translates into a system $(S)$ of affine linear equalities over the coefficients of the $\hat{Q}_j$: $\sum_j (M_j \hat{Q}_j(M_j)^T) P_j - R$ is the zero polynomial, so its coefficients, which are affine linear combinations of the coefficients of the $\hat{Q}_j$ matrices, should be zero; each of these combinations thus yields an affine linear equation. The additional requirement is that the $\hat{Q}_j$ are positive semidefinite.

One can equivalently express the problem by grouping these matrices into a block diagonal matrix $\hat{Q}$ and express the system $(S)$ of affine linear equations over the coefficients of $\hat{Q}$. By exact rational linear arithmetic, we can obtain a system of generators for the solution set of $(S)$: $\hat{Q} \in -F_0 + \textrm{vect}(F_1, \dots, F_m)$. The problem is then to find a positive semidefinite matrix within this \emph{search space}; that is, find $\alpha_1,\dots,\alpha_m$ such that $-F_0+\sum_i \alpha_i F_i \succeq 0$.
This is the problem of \emph{semidefinite programming}: finding a positive semidefinite matrix within an affine linear variety of symmetric matrices, optionally optimizing a linear form \cite{Semidefinite_Programming_96,cvxopt}.

For instance, the second unsatisfiability witness we gave for constraint system~\ref{eqn:very_simple_example} is defined, using monomials $\{1,y\}$, $1$ and $\{1,y\}$, by:
\begin{equation*}
\left(\begin{array}{c|c|c}
\begin{array}{cc}
\frac{2}{3}& 0\\
0 & \frac{1}{3}\\
\end{array}&&\\
\hline
& \frac{1}{3} &\\
\hline
&&
\begin{array}{cc}
0 & 0 \\
0 & 0 \\
\end{array}
\end{array}\right)
\end{equation*}

It looks like finding a solution to Equ.~\ref{eqn:sos-equation} just amounts to a SDP problem. There are, however, several problems to this approach:
\begin{enumerate}
\item For the general \tgerman{Positivstellensatz} witness problem, the set of polynomials to consider is exponential in the number of inequalities.
\item Except for the simple problem of proving that a given polynomial is a sum of squares, we do not know the degree of the $Q_j$ in advance, so we cannot
\footnote{There exist non-elementary bounds on the degree of the monomials needed \cite{Lombardi_LNM92}. In the case of Schm\"udgen's result on compact sets, there are better bounds~\cite{Schweighofer2004529}.} choose finite sets of monomials $M_j$.
The dimension of the vector space for $Q_j$ grows quadratically in $|M_j|$.
\item Some SDP algorithms can fail to converge if the problem is not \emph{strictly feasible} --- that is, the solution set has empty interior, or, equivalently, is not full dimensional (that is, it is included within a strict subspace of the search space).
\item SDP algorithms are implemented in floating-point. If the solution space is not full dimensional, they tend to provide solutions $\hat{Q}$ that are ``almost'' positive semidefinite (all eigenvalues greater than $-\epsilon$ for some small positive $\epsilon$), but not positive semidefinite.
\end{enumerate}

Regarding problem~1, bounds on degrees only matter for the \emph{completeness} of the refutation method: we are guaranteed to find the certificate if we look in a large enough space. They are not needed for \emph{soundness}: if we find a correct certificate by looking in a portion of the huge search space, then that certificate is correct regardless. This means that we can limit the choice of monomials in~$M_j$ and hope for the best.

Regarding the second and third problems~: what is needed is a way to reduce the dimension of the search space, ideally up to the point that the solution set is full dimensional. As recalled by \cite{PeyrlParrilo08}, in a sum-of-square decomposition of a polynomial $P$, only monomials $x_1^{\alpha_1} \dots x_n^{\alpha_n}$ such that $2(\alpha_1,\dots,\alpha_n)$ lies within the Newton polytope%
\footnote{The Newton polytope of a polynomial $P$, or in Reznick's terminology, its \emph{cage}, is the convex hull of the vertices $(\alpha_1,\dots,\alpha_n)$ such that $x_1^{\alpha_1} \dots x_n^{\alpha_n}$ is a monomial of~$P$.}
of~$P$ can appear \cite[Th.~1]{Reznick78}.
This helps reduce the dimension if $P$ is known in advance (as in a sum-of-squares decomposition to prove positivity) but does not help for more general equations.

Kaltofen et al. \cite{Kaltofen_et_al_JSC09} suggest solving the SDP problem numerically and looking for rows with very small values, which indicate useless monomials that can be safely removed from the basis; in other words, they detect ``approximate kernel vectors'' from the canonical basis. Our method is somehow a generalization of theirs: we detect kernel vectors whether or not they are from the canonical basis.

In the next section, we shall investigate the fourth problem: how to deal with solution sets with empty interior.

\subsection{How to Deal with Degenerate Cases}
In the preceding section, we have shown how to reduce the problem of finding unsatisfiability witnesses to a SDP feasibility problem, but pointed out one crucial difficulty: the possible degeneracy of the solution set. In this section, we explain more about this difficulty and how to work around it. 

Let $\sdpcone$ be the cone of positive semidefinite matrices. We denote by $M \succeq 0$ a positive semidefinite matrix~$M$, by $M \succ 0$ a positive definite matrix~$M$. The vector $\ve{y}$ is decomposed into its coordinates $y_i$. $\tilde{x}$ denotes a floating-point value close to an ideal real value~$x$.

We consider a SDP feasibility problem: given a family of symmetric matrices $F_0, F_i, \dots, F_m$, find $(y_i)_{1 \leq i \leq m}$ such that
\begin{equation}\label{eq:SDP_problem}
F(\ve{y})=-F_0+\sum_{i=1}^m y_i F_i \succeq 0.
\end{equation}
The $F_i$ have rational coefficients, and we suppose that there is at least one rational solution for $\ve{y}$ such that $F(\ve{y}) \succeq 0$. The problem is how to find such a solution.

If nonempty, the solution set $S \subseteq \bbR^m$ for the $\ve{y}$, also known as the \emph{spectrahedron}, is semialgebraic, convex and closed; its boundary consists in $\ve{y}$ defining singular positive semidefinite matrices, its interior are positive definite matrices.
We say that the problem is strictly feasible if the solution set has nonempty interior. Equivalently, this means that the convex $S$ has dimension~$m$.

Interior point methods used for semidefinite feasibility, when the solution set has nonempty interior, tend to find a solution $\ve{\tilde{y}}$ in the interior away from the boundary. Mathematically speaking, if $\ve{\tilde{y}}$ is a numerical solution in the interior of the solution set, then there is $\epsilon > 0$ such that for any $\ve{y}$ such that $\|\ve{y} - \ve{\tilde{y}}\| \leq \epsilon$, $\ve{y}$ is also a solution.  Choose a very close rational approximation $\ve{y}$ of $\ve{\tilde{y}}$, then unless we are unlucky (the problem is almost degenerate and all any suitable $\epsilon$ is extremely small), then $\ve{y}$ is also in the interior of $S$. Thus, $F(\ve{y})$ is a solution of problem~\ref{eq:SDP_problem}.

 This is why earlier works on sums-of-square methods \cite{PeyrlParrilo08} have proposed finding rational solutions only when the SDP problem is strictly feasible. In this article, we explain how to do away with the strict feasibility clause.

Some problems are not strictly feasible. Geometrically, this means that the linear affine space $\{ -F_0 + \sum_{i=1}^m y_i F_i \mid (y_1, \dots, y_m) \in \bbR^m \}$ is tangent to the semidefinite positive cone~$\sdpcone$. Alternatively, this means that the solution set is included in a strict linear affine subspace of~$\bbR^m$. Intuitively, this means that we are searching for the solution in ``too large a space''; for instance, if $m=2$ and $\ve{y}$ lies in a plane, this happens if the solution set is a point or a segment of a line. In this case, some SDP algorithms may fail to converge if the problem is not strictly feasible, and those that converge, in general, will find a point slightly outside the solution set. The main contribution of this article is a workaround for this problem.

\subsection{Simplified algorithm}
\label{part:algorithm}
 We shall thus now suppose the problem has empty interior.

The following result is crucial but easily proved:
\begin{lemma}\label{lem:sdpcone_tangent_kernel}
Let $E$ be a linear affine subspace of the $n \times n$ symmetric matrices such that $E \cap \sdpcone \neq \emptyset$.
$F$ in the relative interior $I$ of $E \cap \sdpcone$. Then it follows:
\begin{enumerate}
\item For all $F' \in E \cap \sdpcone$, $\ker F \subseteq \ker F'$.
\item The least affine space containing $E \cap \sdpcone$ is $H = \{ M \in E \mid \ker M \supseteq \ker F \}$.
\end{enumerate}

\end{lemma}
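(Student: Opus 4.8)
The plan is to exploit convexity of $E\cap\sdpcone$ together with the fact that $F$ lies in its relative interior, so that every other point $F'$ can be "overshot" slightly from $F$ while staying inside the set. Concretely, fix $F'\in E\cap\sdpcone$. Since $F$ is in the relative interior of the convex set $E\cap\sdpcone$, the point $G_\lambda = F + \lambda(F-F') = (1+\lambda)F-\lambda F'$ still lies in $E\cap\sdpcone$ for some small $\lambda>0$. I would then take any $\ve{v}\in\ker F$ and compute the quadratic form: $0\le \transpose{\ve{v}}G_\lambda\ve{v} = (1+\lambda)\transpose{\ve{v}}F\ve{v}-\lambda\transpose{\ve{v}}F'\ve{v} = -\lambda\transpose{\ve{v}}F'\ve{v}$, using $F\ve{v}=0$. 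Since $\lambda>0$ this forces $\transpose{\ve{v}}F'\ve{v}\le 0$; but $F'\succeq 0$ gives $\transpose{\ve{v}}F'\ve{v}\ge 0$, hence $\transpose{\ve{v}}F'\ve{v}=0$. For a positive semidefinite matrix, $\transpose{\ve{v}}F'\ve{v}=0$ implies $F'\ve{v}=0$ (write $F'=\transpose{L}L$ via a Cholesky-type factorization, so $\|L\ve{v}\|^2=0$, whence $L\ve{v}=0$ and $F'\ve{v}=\transpose{L}L\ve{v}=0$). Therefore $\ve{v}\in\ker F'$, which proves part~1.

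For part~2, I first note that $\ker F\subseteq\ker M$ is an affine (indeed linear) condition on $M$, so $H=\{M\in E\mid \ker M\supseteq\ker F\}$ is an affine subspace of $E$; by part~1 it contains $E\cap\sdpcone$, hence it contains the least affine space $A$ spanned by $E\cap\sdpcone$. The reverse inclusion $H\subseteq A$ is the substantive direction. The idea is that $F\in A$ (since $F\in E\cap\sdpcone\subseteq A$), and for any $M\in H$ I want to show $M\in A$; since $A$ is affine and $F\in A$, it suffices to show $F+t(M-F)\in E\cap\sdpcone$ for some $t\ne 0$, because then $M-F$ is a direction of $A$ and $M=F+(M-F)\in A$. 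Now $M\in H\subseteq E$ and $F\in E$, so $F+t(M-F)\in E$ for all $t$; the only thing to check is positive semidefiniteness for small $|t|$. Here I would use that $F$ is in the relative interior of $E\cap\sdpcone$: decompose $\bbR^n = \ker F \oplus (\ker F)^\perp$; on $(\ker F)^\perp$, $F$ restricts to a positive definite form, so it dominates the restriction of $M-F$ for small $|t|$, while on $\ker F$ both $F$ and $M$ vanish (the latter since $M\in H$), so $F+t(M-F)$ also vanishes on $\ker F\times\bbR^n$. Writing $N_t = F+t(M-F)$ in block form with respect to this decomposition, the $\ker F$ block-row and block-column are identically zero, and the $(\ker F)^\perp$ block is $F|_{(\ker F)^\perp} + t\,(M-F)|_{(\ker F)^\perp}$, which is positive definite for $|t|$ small enough. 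A block matrix whose only nonzero block is a positive semidefinite principal block is itself positive semidefinite, so $N_t\succeq 0$ for small $|t|$, giving $N_t\in E\cap\sdpcone$ and hence $M\in A$.

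The main obstacle is the second direction of part~2: one must be careful that membership in $H$ really does let the perturbation $F+t(M-F)$ stay positive semidefinite, and the cleanest way to see this is the block decomposition argument above, where the hypothesis $\ker F\subseteq\ker M$ is exactly what kills the off-diagonal and $\ker F$-diagonal blocks, leaving a perturbation of a positive definite matrix on the complement. Everything else — part~1 and the "easy" inclusion $E\cap\sdpcone\subseteq H$ — is a short computation with the quadratic form, relying only on the elementary fact that $\transpose{\ve{v}}F'\ve{v}=0$ together with $F'\succeq 0$ forces $F'\ve{v}=0$.
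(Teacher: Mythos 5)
Your proof is correct, and part~1 is essentially the paper's argument up to a cosmetic change: you overshoot from $F$ in the single direction $F-F'$ and combine $\transpose{\ve{v}}F'\ve{v}\le 0$ with $F'\succeq 0$, whereas the paper overshoots on both sides of $F$ to get $\transpose{\ve{v}}(F'-F)\ve{v}=0$ directly; both rely on the relative openness of $I$ in exactly the same way.

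For part~2 the plan is also the same --- show that for each $M\in H$ one has $F+t(M-F)\in E\cap\sdpcone$ for some $t\ne 0$, so that $M-F$ is a tangent direction of the affine hull --- but the positive-semidefiniteness step is handled by a genuinely different device. You block-decompose along $\ker F\oplus(\ker F)^\perp$, use $\ker F\subseteq\ker M$ (together with symmetry) to zero out every block touching $\ker F$, and then perturb the remaining positive-definite block, invoking only the openness of the positive-definite cone. The paper instead argues via inertia: for small $t$ the matrix $F+t\Delta$ retains $n-\dim\ker F$ strictly positive eigenvalues, and $\ker F\subseteq\ker(F+t\Delta)$ leaves no room for negative ones, so $F+t\Delta\succeq 0$. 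Your block argument is slightly more elementary (no appeal to eigenvalue continuity or inertia counting) and makes fully explicit where the hypothesis $\ker F\subseteq\ker M$ is used, whereas the paper's version is terser once the spectral facts are granted and leans on a few auxiliary lemmas (relative interior of a convex set, isotropic cone equals kernel for a PSD matrix) that you prove inline.
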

Suppose we have found a numerical solution $\ve{\float{y}}$, but it is nearly singular --- meaning that it has some negative eigenvalues extremely close to zero. This means there is $\ve{v} \neq \ve{0}$ such that $|\ve{v}.F(\ve{\float{y}})| \leq \epsilon \| \ve{v} \|$. Suppose that $\ve{\float{y}}$ is very close to a rational solution $\ve{y}$ and, that $\ve{v}.F(\ve{y}) = 0$, and also that $\ve{y}$ is in the relative interior of $S$ --- that is, the interior of that set relative to the least linear affine space containing~$S$. Then, by lemma~\ref{lem:sdpcone_tangent_kernel}, all solutions $F(\ve{y'})$ also satisfy $\ve{v}.F(\ve{y'})=0$. Remark that the same lemma implies that either there is no rational solution in the relative interior, or that rational solutions are dense in~$S$.

How can finding such a $\ve{v}$ help us? Obviously, if $\ve{v} \in \bigcap_{i=0}^m \ker F_i$, its discovery does not provide any more information than already present in the linear affine system $-F_0+\vect(F_1,\dots,F_m)$. We thus need to look for a vector outside that intersection of kernels; then, knowing such a vector will enable us to reduce the dimension of the search space from $m$ to $m' < m$.

Thus, we look for such a vector in the orthogonal complement of $\bigcap_{i=0}^m \ker F_i$, which is the vector space generated by the rows of the symmetric matrices $F_0, \dots, F_m$. We therefore compute a full rank matrix $B$ whose rows span the exact same space; this can be achieved by echelonizing a matrix obtained by stacking $F_0,\dots,F_m$. Then, $\ve{v} = \ve{w}B$ for some vector $\ve{w}$. We thus look for $\ve{w}$ such that $G(\ve{y}).\ve{w}=0$, with $G(\ve{y}) = B F(\ve{y}) \transpose{B}$.

The question is how to find such a $\ve{w}$ with rational or, equivalently, integer coefficients. Another issue is that this vector should be ``reasonable'' --- it should not involve extremely large coefficients, which would basically amplify the floating-point inaccuracies.

We can reformulate the problem as: find $\ve{w} \in \bbZ^m \setminus \{0\}$ such that both $\ve{w}$ and $G(\ve{\float{y}}).\ve{w}$ are ``small'', two constraints which can be combined into a single objective to be minimized $\alpha^2 \| G(\ve{\float{y}}).\ve{w} \|_2^2 + \|\ve{w}\|_2^2$, where $\alpha > 0$ is a coefficient for tuning how much we penalize large values of $\| G(\ve{\float{y}}).\ve{w} \|_2$ in comparison to large values of $\| \ve{w} \|_2$. If $\alpha$ is large enough, the difference between $\alpha G(\ve{\float{y}})$ and its integer rounding $M$ is small. We currently choose $\alpha = \alpha_0 / \| G(\ve{\float{y}}) \|$, with $\| M \|$ the Frobenius norm of $M$ (the Euclidean norm for $n \times n$ matrices being considered as vectors in $\bbR^{n^2}$), and $\alpha_0=10^{15}$.

We therefore try searching for a small (with respect to the Euclidean norm) nonzero vector that is an integer linear combination of the $l_i = (0, \dots, 1, \dots, 0, m_i)$ where $m_i$ is the $i$-th row of~$M$ and the $1$ is at the $i$-th position. Note that, because of the diagonal of ones, the $l_i$ form a free family.

This problem is known as finding a short vector in an integer lattice, and can be solved by the Lenstra-Lenstra-Lov\'asz (LLL) algorithm. This algorithm outputs a free family of vectors $s_i$ such that $s_1$ is very short. Other vectors in the family may also be very short.

Once we have such a small vector $\ve{w}$, using exact rational linear algebra, we can compute $F'_0, \dots, F'_{m'}$ such that
{\small
\begin{multline}
\label{eq:dimension_reduction}
\left\{ -F'_0 + \sum_{i=1}^{m'} y'_i F'_i \mid (y_1, \dots, y_{m'}) \in \bbR^{m'} \right\} =\\
\left\{ -F_0 + \sum_{i=1}^{m} y_i F_i \mid (y_1, \dots, y_{m}) \in \bbR^{m} \right\} \cap
\{ F \mid F.\ve{v} = 0 \}
\end{multline}}
The resulting system has lower search space dimension $m' < m$, yet the same solution set dimension. By iterating the method, we eventually reach a search space dimension equal to the dimension of the solution set.

If we find no solution $F'_0$, then it means that the original problem had no solution (the Positivstellensatz problem has no solution, or the monomial bases were too small), or that a bad vector $\ve{v}$ was chosen due to lack of numerical precision. This is the only bad possible outcome of our algorithm: it may fail to find a solution that actually exists; in our experience, this happens only on larger problems (search space of dimension 3000 and more), where the result is sensitive to numerical roundoff. In contrast, our algorithm may never provide a wrong result, since it checks for correctness in a final phase.

\subsection{More Efficient Algorithm}
In lieu of performing numerical SDP solving on $F = -F_0 + \sum y_i F_i \succeq 0$, we can perform it in lower dimension on $-(B F_0 \transpose{B})+ \sum y_i (B F_i \transpose{B}) \succeq 0$. Recall that the rows of $B$ span the orthogonal complement of $\bigcap_{i=0}^m \ker F_i$, which is necessarily included in $\ker F$; we are therefore just leaving out dimensions that always provide null eigenvalues.

The reduction of the sums-of-squares problem (Eq.~\ref{eqn:sos-equation}) provides matrices with a fixed block structure, one block for each $P_j$: for a given problem all matrices $F_0, F_1, \dots, F_m$ are block diagonal with respect to that structure. We therefore perform the test for positive semidefiniteness of the proposed $F(\ve{y})$ solution block-wise (see Sec.~\ref{part:implementation} for algorithms). For the blocks not found to be positive semidefinite, the corresponding blocks of the matrices $B$ and $F(\ve{\float{y}})$ are computed, and LLL is performed.

As described so far, only a single $\ve{v}$ kernel vector would be supplied by LLL for each block not found to be positive semidefinite. In practice, this tends to lead to too many iterations of the main loop: the dimension of the search space does not decrease quickly enough. We instead always take the first vector $\ve{v}^{(1)}$ of the LLL-reduced basis, then accept following vectors $\ve{v}^{(i)}$ if $\| \ve{v}^{(i)} \|_1 \leq \beta.\| \ve{v}^{(1)} \|_1$ and $\| G(\float{\ve{y}}) . \ve{v}^{(i)} \|_2 \leq \gamma.\| G(\float{\ve{y}}) . \ve{v}^{(1)} \|_2$. For practical uses, we took $\beta = \gamma = 10$.

When looking for the next iteration $\float{\ve{y'}}$, we use the $\float{\ve{y}}$ from the previous iteration as a hint: instead of starting the SDP search from an arbitrary point, we start it near the solution found by the previous iteration. We perform least-square minimization so that $-F'_0 + \sum_{i=1}^{m'} y'_i F'_i$ is the best approximation of $-F_0 + \sum_{i=1}^{m} y_i F_i \mid (y_1, \dots, y_{m})$.

\subsection{Extensions and Alternative Implementation}
As seen in \S\ref{part:examples}, our algorithm tends to produce solutions with large numerators and denominators in the sum-of-square decomposition. We experimented with methods to get $F(\ve{y'}) \approx F(\ve{y})$ such that $F(\ve{y'})$ has a smaller common denominator. This reduces to the following problem: given $\ve{v} \in \ve{f}_0 + \textrm{vect}(\ve{f}_1, \dots, \ve{f}_n)$ a real (floating-point) vector and $\ve{f}_0, \dots, \ve{f}_n$ rational vectors, find $\ve{y}'_1,\dots,\ve{y}_n$ such that $\ve{v}' = \ve{f}_0 + \sum_i y'_i \ve{f}_i \approx \ve{v}$ and the numerators of $\ve{v}'$ have a tunable magnitude (parameter $\mu$). One can obtain such a result by LLL reduction of the rows of:
\begin{equation} M =
\begin{pmatrix}
\bbZ(\beta\mu(\ve{f}_0 - \ve{v})) & \bbZ(\beta \ve{f}_0) & 1 & 0 & \dots & 0 \\
\bbZ(\beta\mu \ve{f}_1)      & \bbZ(\beta \ve{f}_1) & 0 & 1 & \dots &   \\
\vdots                  & \vdots          & 0 &   & \ddots & \\
\bbZ(\beta\mu \ve{f}_n)      & \bbZ(\beta \ve{f}_n) & 0 &   &       & 1\\
\end{pmatrix}
\end{equation}
where $\beta$ is a large parameter (say, $10^{19}$) and $\bbZ(v)$ stands for the integer rounding of $v$. After LLL reduction, one of the short vectors in the basis will be a combination $\sum_{i}^n y_i l_i$ where $l_0, \dots, l_n$ are the rows of $M$, such that $y_0 \neq 0$. Because of the large $\beta\mu$ coefficient, $y_0(\ve{f}_0-\ve{v}) + \sum_{i=1}^n y_i \ve{f}_i$ should be very small, thus $f_0 + \sum_{i=1}^n y_i \ve{f}_i \approx v$. But among those vectors, the algorithm chooses one such that $\sum_{i=0}^n y_i \ve{f}_i$ is not large --- and among the suitable $v'$, the vector of numerators is proportional to $\sum_{i=0}^n y_i \ve{f}_i$.

After computing such a $y'$, we check whether $F(\ve{y'}) \succeq 0$; we try this for a geometrically increasing sequence of $\mu$ and stop as soon as we find a solution.
The matrices $\hat{Q}_j$ then have simpler coefficients than the original ones. Unfortunately, it does not ensue that the sums of square decompositions of these matrices have small coefficients.

An alternative to finding some kernel vectors of a single matrix would be to compute several floating-point matrices, for instance obtained by SDP solving with optimization in multiple directions, and find common kernel vectors using LLL.

\subsection{Sub-algorithms and Implementation}
\label{part:implementation}
The reduction from the problem expressed in Eq.~\ref{eqn:sos-equation} to SDP with rational solutions was implemented in \soft{Sage}.%
\footnote{\soft{Sage} is a computer algebra system implemented using the \soft{Python} programming language, available under the GNU GPL from \url{http://www.sagemath.org}.}

Solving the systems of linear equations $(S)$ (Sec.~\ref{part:reduction}, over the coefficients of the matrices) and $\ref{eq:dimension_reduction}$, in order to obtain a system $-F_0 + \textrm{vect}(F_1, \dots, F_m)$ of generators of the solution space, is done by echelonizing the equation system (in homogeneous form) in exact arithmetic, then reading the solution off the echelon form. The dimension of the system is quadratic in the number of monomials (on the problems we experimented with, dimensions up to 7900 were found); thus efficient algorithms should be used. In particular, sparse Gaussian elimination in rational arithmetic, which we initially experimented, is not efficient enough; we thus instead use a sparse multi-modular algorithm~\cite[ch.~7]{Stein_modular07} from \soft{LinBox}%
\footnote{\soft{LinBox} is a library for exact linear arithmetic, used by \soft{Sage} for certain operations. \url{http://www.linalg.org/}}%
. Multi-modular methods compute the desired result modulo some prime numbers, and then reconstruct the exact rational values.

One can test whether a symmetric rational matrix $Q$ is positive semidefinite by attempting to convert it into its Gaussian decomposition, and fail once one detects a negative diagonal element, or a nonzero row with a zero diagonal element (Appendix.~\ref{part:gaussian}). We however experimented with three other methods that perform better:
\begin{itemize}
\item Compute the minimal polynomial of $Q$ using a multi-modular algorithm \cite{Adams:2005:SSR:1073884.1073889}. The eigenvalues of $Q$ are its roots; one can test for the presence of negative roots using Descartes' rule of signs. Our experiments seem to show this is the fastest exact method.

\item Compute the characteristic polynomial of $Q$  using a multimodular algorithm \cite{Adams:2005:SSR:1073884.1073889} and do as above. Somewhat slower but more efficient than Gaussian decomposition.

\item Given a basis $B$ of the span of $Q$, compute the Cholesky decomposition of $B^T Q B$ by a numerical method. This decomposition fails if and only if $B^T Q B$ is not positive definite (up to numerical errors), thus succeeds if and only if $Q$ is positive semidefinite (up to numerical errors).

For efficiency, instead of computing the exact basis $B$ of the span of $Q$, we use $B$ from \S\ref{part:algorithm}, whose span includes the span of $Q$. The only risk is that $\ker B \subsetneq \ker Q$ while $Q$ is positive semidefinite, in which case $B^T Q B$ will have nontrivial nullspace and thus will be rejected by the Cholesky decomposition. This is not a problem in our algorithm: it just means that the procedure for finding kernel vectors by LLL will find vectors in $
\ker Q \setminus \ker B$.

One problem could be that the Cholesky decomposition will incorrectly conclude that $B^T Q B$ is not positive definite, while it is but has very small positive eigenvalues. In this case, our algorithm may then find kernel vectors that are not really kernel vectors, leading to an overconstrained system and possibly loss of completeness. We have not encountered such cases.

Another problem could be that a Cholesky decomposition is obtained from a matrix not positive semidefinite, due to extremely bad numerical behavior. At worst, this will lead to rejection of the witness when the allegedly semidefinite positive matrices get converted to sums of squares, at the end of the algorithm.
\end{itemize}

Numerical SDP solving is performed using DSDP%
\footnote{DSDP is a sdp tool available from \url{http://www.mcs.anl.gov/DSDP/}}
\cite{DSDP_manual,Benson:2008:ADS}, communicating using text files.
LLL reduction is performed by \soft{fpLLL}.%
\footnote{\soft{fpLLL} is a LLL library from Damien Stehl\'e et al., available from \url{http://perso.ens-lyon.fr/damien.stehle/}}
Least square projection is performed using \soft{Lapack}'s DGELS.

The implementation is available from the first author's Web page (\url{http://bit.ly/fBNLhR} and \url{http://bit.ly/gPXNF8}).

\subsection{Preliminary Reductions}
The more coefficients to find there are, the higher the dimension is, the longer computation times grow and the more likely numerical problems become. Thus, any cheap technique that reduces the search space is welcome.

If one looks for witnesses for problems involving only homogeneous polynomials, then one can look for witnesses built out of a homogeneous basis of monomials (this technique is implemented in our positivity checker).

One could also make use of symmetries inside the problem. For instance, if one looks for a nonnegativity witness $P=N/D$ of a polynomial $P$, and $P$ is symmetric (that is, there exists a substitution group $\Sigma$ for the variables of $P$ such that $P.\sigma = P$ for $\sigma \in \Sigma$), then one may reduce the search to symmetric $N$ and $D$. If $P=N/D$ is a witness, then $DP=N$ thus for any $\sigma$, $(D.\sigma) P = (N. \sigma)$ and thus $(\sum_\sigma D.\sigma) P = (\sum_\sigma N.\sigma)$, thus $D' = \sum_\sigma D.\sigma$ and $N' = \sum_\sigma N.\sigma$ constitute a symmetric nonnegativity witness.

\section{Examples}\label{part:examples}
The following system of inequalities has no solution (neither \soft{Redlog} nor \soft{QepCad} nor \soft{Mathematica}~5 can prove it; \soft{Mathematica}~7 can):
\begin{equation}\left\{\begin{array}{l}
P_1 = x^3 + x y + 3 y^2 + z + 1 \geq 0\\
P_2 = 5 z^3 - 2 y^2 + x + 2 \geq 0 \quad P_3 = x^2 + y - z \geq 0\\
P_4 = -5 x^2 z^3 - 50 x y z^3 - 125 y^2 z^3  + 2 x^2 y^2 + 20 x y^3 + 50 y^4 - 2 x^3\\
\quad  - 10 x^2 y - 25 x y^2 - 15 z^3 - 4 x^2 - 21 x y - 47 y^2 - 3 x - y - 8 \geq 0
\end{array}\right.
\end{equation}

This system was concocted by choosing $P_1,P_2,P_3$ somewhat haphazardly and then $P_4 = -(P_1 + (3+(x+5y)^2) P_2 + P_3 + 1 + x^2)$, which guaranteed the system had no solution. The initial 130 constraints yield a search space of dimension 145, and after four round of numeric solving one gets an unsatisfiability witness (sums of squares $Q_j$ such that $\sum_{j=1}^4 P_j Q_j + Q_5 = 0$). Total computation time was 4.4~s. Even though there existed a simple solution (note the above formula for $P_4$), our algorithm provided a lengthy one, with large coefficients (and thus unfit for inclusion here).

\begin{figure}[tpb]
\begin{minipage}{\columnwidth}
\sloppy\scriptsize\raggedright
$Q_1 = 8006878 A_1 ^2
   + 29138091   A_2 ^2
   + 25619868453870\allowbreak /4003439 A_3 ^2
   + 14025608   A_4 ^2
   + 14385502   A_5 ^2
   \allowbreak + 85108577038951965167\allowbreak / 12809934226935   A_6 ^2$\\
$Q_2 = 8006878   B_1^2
   + 25616453   B_2 ^2
   + 108749058736871\allowbreak/4003439   B_3 ^2
   + 161490847987681\allowbreak/25616453   B_4 ^2
   + 7272614   B_5 ^2
   + 37419351   B_6 ^2
   + 13078817768190\allowbreak/3636307   B_7^2
   + 71344030945385471151\allowbreak /  15535579819553   B_8 ^2
   + 539969700325922707586\allowbreak /161490847987681   B_9 ^2
   + 41728880843834\allowbreak / 12473117   B_{10}^2
   + 131008857208463018914\allowbreak /62593321265751   B_11^2$,
where\\
$A_1 = -1147341\allowbreak /4003439 x_1^2 x_3 - 318460\allowbreak /4003439 x_2^2 x_3 + x_3^3$
$A_2 = x_2 x_3^2$
$A_3 = -4216114037644\allowbreak /12809934226935 x_1^2 x_3 + x_2^2 x_3$
$A_4 = x_1 x_3^2$,
$A_5 = x_1 x_2 x_3$,
$A_6 = x_1^2 x_3$
and
$B_1 =  -1102857\allowbreak /4003439 x_1^4 x_2 x_3 - 5464251\allowbreak /4003439 x_1^2 x_2 x_3^3 + 2563669\allowbreak /4003439 x_2^3 x_3^3 + x_2 x_3^5$,
$B_2 = -9223081\allowbreak /25616453 x_1^4 x_3^2 - 18326919\allowbreak /25616453 x_1^2 x_2^2 x_3^2 + 1933547\allowbreak /25616453 x_2^4 x_3^2 + x_2^2 x_3^4$,
$B_3 = -2617184886847\allowbreak /15535579819553 x_1^4 x_2 x_3 - 12918394932706\allowbreak /15535579819553 x_1^2 x_2 x_3^3 + x_2^3 x_3^3$,
$B_4 =  -26028972147097\allowbreak /161490847987681 x_1^4 x_3^2 - 135461875840584\allowbreak /161490847987681 x_1^2 x_2^2 x_3^2 + x_2^4 x_3^2$,
$B_5 = -2333331\allowbreak /3636307 x_1^3 x_2 x_3^2 - 1302976\allowbreak /3636307 x_1 x_2^3 x_3^2 + x_1 x_2 x_3^4$,
$B_6 = -11582471\allowbreak /37419351 x_1^5 x_3 - 12629854\allowbreak /37419351 x_1^3 x_2^2 x_3 - 4402342\allowbreak /12473117 x_1^3 x_3^3 + x_1 x_2^2 x_3^3$,
$B_7 = -x_1^3 x_2 x_3^2 + x_1 x_2^3 x_3^2$,
$B_8 =  -x_1^4 x_2 x_3 + x_1^2 x_2 x_3^3$,
$B_9 = -x_1^4 x_3^2 + x_1^2 x_2^2 x_3^2$,
$B_{10} =  -17362252580967\allowbreak /20864440421917 x_1^5 x_3 \allowbreak - 3502187840950\allowbreak /20864440421917 x_1^3 x_2^2 x_3 + x_1^3 x_3^3$,
$B_{11} =  -x_1^5 x_3 + x_1^3 x_2^2 x_3$.
\end{minipage}

\caption{Motzkin's polynomial $M$ (Eq.~\ref{eqn:motzkin}) as $Q_2/Q_1$.}
\label{fig:motzkin_decomposition}
\end{figure}

Motzkin's polynomial $M$ (Eq.~\ref{eqn:motzkin}) cannot be expressed as a sum of squares, but it can be expressed as a quotient of two sums of squares. We solved $M. Q_1 - Q_2 = 0$ for sums of squares $Q_1$ and $Q_2$ built from homogeneous monomials of respective total degrees $3$ and $6$ --- lesser degrees yield no solutions (Fig.~\ref{fig:motzkin_decomposition}). The equality relation over the polynomials yields 66 constraints over the matrix coefficients and a search space of dimension 186. Four cycles of SDP programming and LLL are then needed, total computation time was 4.1~s.

We exhibited witnesses that each of the 8 semidefinite positive forms listed by \cite{Reznick96_Hil17}, which are not sums of squares of polynomials, are quotients of sums of squares (Motzkin's $M$, Robinson's $R$ and $f$, Choi and Lam's $F$, $Q$, $S$, $H$ and Schm\"udgen's $q$). These examples include polynomials with up to 6 variables and search spaces up to dimension 1155.  We did likewise with \textit{delzell}, \textit{laxlax} and \textit{leepstarr2} from \cite{Kaltofen_et_al_JSC09}. The maximal computation time was 7'.

We then converted these witnesses into Coq proofs of nonnegativity using a simple \soft{Sage} script. These proofs use the \texttt{Ring} tactic, which checks for polynomial identity. Most proofs run within a few seconds, though \emph{laxlax} takes 7'39'' and Robinson's $f$ 5'07''; the witness for \emph{leepstarr2} is too large for the parser.
We also exhibited a witness that the \textsf{Vor1} polynomial cited by \cite{Safey_El_Din_ISSAC08} is a sum of squares.

John Harrison kindly provided us with a collection of 14 problems that his system \cite{harrison-sos} could not find witnesses for. These problems generally have the form $P_1 \geq 0 \land \dots \land P_n \geq 0 \Rightarrow R \geq 0$. In order to prove such implication, we looked for witnesses consisting of sums of squares $(Q_1, \dots, Q_n, Q_R)$,such that $\sum_j Q_j P_j + Q_R R = 0$ with $Q_R \neq 0$, and thus $R = \frac{\sum_j Q_j P_j}{Q_R}$. In some cases, it was necessary to use the products $\prod_i P_i^{w_i}$ for $\ve{w} \in \{0,1\}^n$ instead of the $P_i$. We could find witnesses for all those problems,%
\footnote{A 7z archive is given at \url{http://bit.ly/hM7HW3}}.
though for some of them, the witnesses are very large, taking up megabytes. Since these searches were done without making use of symmetries in the problem, it is possible that more clever techniques could find smaller witnesses.

\section{Conclusion and further works}
We have described a method for solving SDP problems in rational arithmetic. This method can be used to solve sums-of-squares problems even in geometrically degenerate cases. We illustrated this method with applications to proving the nonnegativity of polynomials, or the unsatisfiability of systems of polynomial (in) equalities. The method then provides easily checkable proof witnesses, in the sense that checking the witness only entails performing polynomial arithmetic and applying a few simple mathematical lemmas. We have implemented the conversion of nonnegativeness witnesses to Coq proofs. A more ambitious implementation, mapping Coq real arithmetic proofs goals to Positivstellensatz problems through the \texttt{Psatz} tactic from the \soft{MicroMega} package \cite{Besson:2006:FRA:1789277.1789281}, then mapping Positivstellensatz witnesses back to proofs, is underway.

One weakness of the method is that it tends to provide ``unnatural'' witnesses --- they tend to have very large coefficients. These are machine-checkable but provide little insights to the reader. An alternative would be to provide the matrices and some additional data (such as their minimal polynomial) and have the checker verify that they are semidefinite positive; but this requires formally proving, once and for all, some non-trivial results on polynomials, symmetric matrices and eigenvalues (e.g. the Cayley-Hamilton theorem), as well as possibly performing costly computations, e.g. evaluating a matrix polynomial.

A more serious limitation for proofs of unsatisfiability is the very high cost of application of the Positivstellensatz. There is the exponential number of polynomials to consider, and the unknown number of monomials. It would be very interesting if there could be some simple results, similar to the Newton polytope approach, for reducing the dimension of the search space or the number of polynomials to consider. Another question is whether it is possible to define SDP problems from Positivstellensatz equations for which the spectrahedron has rational points only at its relative boundary.

While our method performed well on examples, and is guaranteed to provide a correct answer if it provides one, we have supplied no completeness proof --- that is, we have not proved that it necessarily provides a solution if there is one. This is due to the use of floating-point computations. One appreciable result would be that a solution should be found under the assumption that floating-point computations are precise up to $\epsilon$, for a value of $\epsilon$ and the various scaling factors in the algorithm depending on the values in the problem or the solution.

It seems possible to combine our reduction method based on LLL with the Newton iterations suggested by \cite{Kaltofen_et_al_JSC09,Kaltofen_et_al_ISSAC08}, as an improvement over their strategy for detection of useless monomials and reduction of the search space. Again, further experiment is needed.

\printbibliography

\appendix

\section{Gaussian Reduction and Positive Semidefiniteness}
\label{part:conversion_sos}
\label{part:gaussian}
An algorithm for transforming a semidefinite positive matrix into a ``sum of squares'' form, also known as Gaussian reduction:
\begin{lstlisting}
for i:=1 to n do
begin
  if m[i, i] < 0 then
    throw non_positive_semidefinite
  if m[i, i] = 0 then
    if m.row(i) <> 0
      throw non_positive_semidefinite
  else
    begin
      v := m.row(i) / m[i, i]
      output.append(m[i, i], v)
      m := m - m[i, i] * v.transpose() * v
    end  
end
\end{lstlisting}

Suppose that the entrance of iteration $i$, $m[i\dots n, i\dots n]$ is positive semidefinite. If $m_{i,i}=0$, then the $i$th base vector is in the isotropic cone of the matrix, thus of its kernel, and the row $i$ must be zero. Otherwise, $m_{i,i} > 0$. By adding $\epsilon$ to the diagonal of the matrix, we would have a positive definite matrix and thus the output of the loop iteration would also be positive definite, as above. By $\epsilon \rightarrow 0$ and the fact that the set of positive semidefinite matrices is topologically closed, then the output of the loop iteration is also positive semidefinite.

The \lstinline!output! variable is then a list of couples $(c_i,v_i)$ such that $c_i > 0$ and the original matrix $m$ is equal to $\sum_i c_i \transpose{v_i} v_i$ (with $v_i$ row vectors). Otherwise said, for any row vector $u$,
$u m \transpose{u} = \sum_i c_i \langle u, v_i \rangle^2$.


\end{document}